\documentclass[reqno, 12pt]{amsart}
\pdfoutput=1
\makeatletter
\let\origsection=\section \def\section{\@ifstar{\origsection*}{\mysection}}
\def\mysection{\@startsection{section}{1}\z@{.7\linespacing\@plus\linespacing}{.5\linespacing}{\normalfont\scshape\centering\S}}
\makeatother

\usepackage{amsmath,amssymb,amsthm}
\usepackage{mathrsfs}
\usepackage{mathabx}\changenotsign
\usepackage{dsfont}
\usepackage{bbm}

\usepackage{xcolor}
\usepackage[backref]{hyperref}
\hypersetup{
	colorlinks,
	linkcolor={red!60!black},
	citecolor={green!60!black},
	urlcolor={blue!60!black}
}

\usepackage[open,openlevel=2,atend]{bookmark}

\usepackage[abbrev,msc-links,backrefs]{amsrefs}
\usepackage{doi}

\renewcommand{\PrintDOI}[1]{\doi{#1}}

\usepackage[T1]{fontenc}
\usepackage{lmodern}
\usepackage[babel]{microtype}
\usepackage[english]{babel}
\usepackage{verbatim}

\linespread{1.3}
\usepackage{geometry}
\geometry{left=27.5mm,right=27.5mm, top=25mm, bottom=25mm}

\numberwithin{equation}{section}
\numberwithin{figure}{section}

\usepackage{enumitem}

\let\polishlcross=\l
\def\l{\ifmmode\ell\else\polishlcross\fi}

\def\qand{\quad\text{and}\quad}

\let\emptyset=\varnothing
\let\setminus=\smallsetminus

\makeatletter
\def\moverlay{\mathpalette\mov@rlay}
\def\mov@rlay#1#2{\leavevmode\vtop{   \baselineskip\z@skip \lineskiplimit-\maxdimen
		\ialign{\hfil$\m@th#1##$\hfil\cr#2\crcr}}}
\newcommand{\charfusion}[3][\mathord]{
	#1{\ifx#1\mathop\vphantom{#2}\fi
		\mathpalette\mov@rlay{#2\cr#3}
	}
	\ifx#1\mathop\expandafter\displaylimits\fi}
\makeatother

\newcommand{\bigdcup}{\charfusion[\mathop]{\bigcup}{\cdot}}

\DeclareFontFamily{U}  {MnSymbolC}{}
\DeclareSymbolFont{MnSyC}         {U}  {MnSymbolC}{m}{n}
\DeclareFontShape{U}{MnSymbolC}{m}{n}{
	<-6>  MnSymbolC5
	<6-7>  MnSymbolC6
	<7-8>  MnSymbolC7
	<8-9>  MnSymbolC8
	<9-10> MnSymbolC9
	<10-12> MnSymbolC10
	<12->   MnSymbolC12}{}
\DeclareMathSymbol{\powerset}{\mathord}{MnSyC}{180}

\usepackage{tikz}
\usetikzlibrary{calc,decorations.pathmorphing}
\pgfdeclarelayer{background}
\pgfdeclarelayer{foreground}
\pgfdeclarelayer{front}
\pgfsetlayers{background,main,foreground,front}

\usepackage{subcaption}
\captionsetup[subfigure]{labelfont=rm}

\newcommand{\qedge}[7]{
	
	\ifx\relax#4\relax
	\def\qoffs{0pt}
	\else
	\def\qoffs{#4}
	\fi
	
	\def\qhedge{
		($#1+#3!\qoffs!-90:#2-#3$) --
		($#2+#1!\qoffs!-90:#3-#1$) --
		($#3+#2!\qoffs!-90:#1-#2$) -- cycle}

	\coordinate (12) at ($#1!\qoffs!90:#2$);
	\coordinate (13) at ($#1!\qoffs!-90:#3$);
	\coordinate (23) at ($#2!\qoffs!90:#3$);
	\coordinate (21) at ($#2!\qoffs!-90:#1$);
	\coordinate (31) at ($#3!\qoffs!90:#1$);
	\coordinate (32) at ($#3!\qoffs!-90:#2$);
	
	\def\nqhedge{
		(13) let \p1=($(13)-#1$), \p2=($(12)-#1$) in
		arc[start angle={atan2(\y1,\x1)}, delta angle={atan2(\y2,\x2)-atan2(\y1,\x1)-360*(atan2(\y2,\x2)-atan2(\y1,\x1)>0)}, x radius=\qoffs, y radius=\qoffs] --
		(21) let \p1=($(21)-#2$), \p2=($(23)-#2$) in
		arc[start angle={atan2(\y1,\x1)}, delta angle={atan2(\y2,\x2)-atan2(\y1,\x1)-360*(atan2(\y2,\x2)-atan2(\y1,\x1)>0)}, x radius=\qoffs, y radius=\qoffs] --
		(32) let \p1=($(32)-#3$), \p2=($(31)-#3$) in
		arc[start angle={atan2(\y1,\x1)}, delta angle={atan2(\y2,\x2)-atan2(\y1,\x1)-360*(atan2(\y2,\x2)-atan2(\y1,\x1)>0)}, x radius=\qoffs, y radius=\qoffs] --
		cycle}
	
	\ifx\relax#5\relax
	\def\qlwidth{1pt}
	\else
	\def\qlwidth{#5}
	\fi
	
	\ifx\relax#7\relax
	\fill \nqhedge;
	\else
	\fill[#7]\nqhedge;
	\fi
	
	\ifx\relax#6\relax
	\draw[dotted, line width=\qlwidth,rounded corners=\qoffs]\nqhedge;
	\else
	\draw[dotted, line width=\qlwidth,#6]\nqhedge;
	\fi
}

\let\epsilon=\varepsilon

\let\rho=\varrho
\let\theta=\vartheta

\newcommand{\ccP}{\mathscr{P}}

\newtheoremstyle{note}  {4pt}  {4pt}  {\sl}  {}  {\bfseries}  {.}  {.5em}          {}
\newtheoremstyle{introthms}  {3pt}  {3pt}  {\itshape}  {}  {\bfseries}  {.}  {.5em}          {\thmnote{#3}}
\newtheoremstyle{remark}  {2pt}  {2pt}  {\rm}  {}  {\bfseries}  {.}  {.3em}          {}

\theoremstyle{plain}
\newtheorem{theorem}{Theorem}[section]
\newtheorem{lemma}[theorem]{Lemma}

\newtheorem{prop}[theorem]{Proposition}

\newtheorem{cor}[theorem]{Corollary}
\newtheorem{fact}[theorem]{Fact}
\newtheorem{claim}[theorem]{Claim}

\theoremstyle{note}
\newtheorem{dfn}[theorem]{Definition}

\theoremstyle{remark}
\newtheorem{remark}[theorem]{Remark}

\usepackage{accents}

\usepackage{lineno}
\newcommand*\patchAmsMathEnvironmentForLineno[1]{
	\expandafter\let\csname old#1\expandafter\endcsname\csname #1\endcsname
	\expandafter\let\csname old#1\expandafter\endcsname\csname end#1\endcsname
	\renewenvironment{#1}
	{\linenomath\csname old#1\endcsname}
	{\csname oldend#1\endcsname\endlinenomath}}

\AtBeginDocument{
	%\patchBothAmsMathEnvironmentsForLineno{equation}
	%\patchBothAmsMathEnvironmentsForLineno{align}
	%patchBothAmsMathEnvironmentsForLineno{flalign}
	%\patchBothAmsMathEnvironmentsForLineno{alignat}
	%\patchBothAmsMathEnvironmentsForLineno{gather}
	%\patchBothAmsMathEnvironmentsForLineno{multline}
}

\def\NIP{necessary intersection point}
\def\secmin{\rm{secmin~}}
\newcommand{\blockfam}[2]{\mathcal{B}(#1,#2)}
\newcommand{\scatfam}[3]{\mathcal{A}(#1,#2,#3)}

\begin{document}
	
	\title[$r$-cross~$t$-intersecting families via necessary intersection points]{$r$-cross~$t$-intersecting families via necessary intersection points}
	\author[P. Gupta]{Pranshu Gupta}
	\address{Hamburg University of Technology, Institute of Mathematics, Hamburg, Germany}
	\email{pranshu.gupta@tuhh.de}
	\author[Y. Mogge]{Yannick Mogge}
	\address{Hamburg University of Technology, Institute of Mathematics, Hamburg, Germany}
	\email{yannick.mogge@tuhh.de}
	\author[S. Piga]{Sim\'on Piga}
	\address{Fachbereich Mathematik, Universit\"{a}t Hamburg, Hamburg, Germany}
	\email{simon.piga@uni-hamburg.de}
	\author[B.~Sch\"{u}lke]{Bjarne Sch\"{u}lke}
	\address{Fachbereich Mathematik, Universit\"{a}t Hamburg, Hamburg, Germany}
	\email{bjarne.schuelke@uni-hamburg.de}
	\thanks{The third author was supported by ANID/CONICYT Acuerdo Bilateral DAAD/62170017 through a Ph.D. Scholarship.}

	\keywords{Extremal set theory, cross intersecting families}
	
	\begin{abstract}
		Given integers~$r\geq 2$ and~$n,t\geq 1$ we call families~$\mathcal{F}_1,\dots,\mathcal{F}_r\subseteq\ccP([n])$ $r$-cross~$t$-intersecting if for all~$F_i\in\mathcal{F}_i$,~$i\in[r]$, we have~$\vert\bigcap_{i\in[r]}F_i\vert\geq t$.
		We obtain a strong generalisation of the classic Hilton-Milner theorem on cross intersecting families.
		In particular, we determine the maximum of~$\sum_{j\in [r]}\vert\mathcal{F}_j\vert$ for~$r$-cross~$t$-intersecting families in the cases when these are~$k$-uniform families or arbitrary subfamilies of~$\ccP([n])$.
		Only some special cases of these results had been proved before.
		We obtain the aforementioned theorems as instances of a more general result that considers measures of~$r$-cross $t$-intersecting families.
		This also provides the maximum of~$\sum_{j\in [r]}\vert\mathcal{F}_j\vert$ for families of possibly mixed uniformities~$k_1,\ldots,k_r$. 
		
		%For the proof we introduce the concept of ``necessary intersection points''.
%		We further use this technique to solve another problem regarding intersecting families and measures posed by Frankl and Tokushige.
%		

		%This solves some of the main open problems regarding~$r$-cross~$t$-intersecting families and generalises a result by Frankl and Kupavskii and recent independent work by Frankl and Wong H.W., and Shi, Frankl, and Qian.
		%We further use our proof technique of ``necessary intersection points'' to solve another problem regarding intersecting families and measures posed by Frankl and Tokushige.

		%This work is concerned with the study of~$r$-cross~$t$-intersecting families.
		%Given integers~$r\geq 2$ and~$n,t\geq 1$ we call families~$\mathcal{F}_1,\dots,\mathcal{F}_r\subseteq\ccP([n])$~$r$-cross~$t$-intersecting if for all~$F_i\in\mathcal{F}_i$,~$i\in[r]$, we have~$\vert\bigcap_{i\in[r]}F_i\vert\geq t$.
		%We obtain a strong generalisation of the classic Hilton-Milner theorem on cross intersecting families.
		%In particular, we determine the maximum of~$\sum_{j\in [r]}\vert\mathcal{F}_j\vert$ for~$r$-cross~$t$-intersecting families in the cases when these are~$k$-uniform families, families of possibly mixed uniformities~$k_1,\ldots,k_r$, or arbitrary subfamilies of~$\ccP([n])$.
		%Our main result determines the maximum sum of measures of~$r$-cross~$t$-intersecting families for a fairly general type of measures, including the product measure and the uniform measure.

	\end{abstract}

	\maketitle\section{Introduction}
	
	One of the main themes in extremal set theory are intersecting families.
	Given some~$n\in\mathds{N}$ a family~$\mathcal{F}\subseteq\ccP([n])$ is said to be \emph{intersecting} if for all~$F,F'\in\mathcal{F}$ we have~$F\cap F'\neq\emptyset$.
	The following well-known theorem by Erd\H{o}s, Ko, and Rado~\cite{EKR} is one of the earliest results in extremal set theory.
	
	\begin{theorem}\label{thm:EKR}
		Let~$k,n\in\mathds{N}$ with~$2k\leq n$ and let~$\mathcal{F}\subseteq [n]^{(k)}$ be an intersecting family.
		Then~$\vert\mathcal{F}\vert\leq\binom{n-1}{k-1}$ and this bound is sharp.
	\end{theorem}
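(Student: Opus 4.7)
The plan is to use Katona's cyclic permutation method, which gives a particularly clean proof under the hypothesis $2k\leq n$. I would first consider the set of all cyclic orderings of $[n]$ (of which there are $(n-1)!$) and, for each such ordering, examine the $n$ consecutive $k$-arcs on the cycle. The central local claim is that at most $k$ of these arcs can simultaneously belong to an intersecting family~$\mathcal{F}$.

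To establish this claim I would reformulate intersection combinatorially: two $k$-arcs on the $n$-cycle meet if and only if their starting positions are at cyclic distance at most $k-1$. Under the assumption $n \geq 2k$, a set of positions that is pairwise within cyclic distance $k-1$ must lie inside a single window of $k$ consecutive positions; hence at most $k$ of the $n$ arcs in any given cyclic ordering can belong to $\mathcal{F}$. This is the main obstacle of the proof and is where the hypothesis $n\geq 2k$ is genuinely used.

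With the local bound in hand, I would finish by double counting pairs $(\sigma,F)$, where $\sigma$ is a cyclic ordering of $[n]$ and $F\in \mathcal{F}$ appears as a $k$-arc of $\sigma$. From one side the count is at most $k\cdot (n-1)!$. From the other side, each fixed $F\in\mathcal{F}$ appears as an arc in exactly $k!(n-k)!$ cyclic orderings (choose a starting position on the cycle, permute the elements of $F$ within the arc, and permute the remaining elements outside). Therefore
\[
|\mathcal{F}| \cdot k!(n-k)! \;\leq\; k\cdot (n-1)!,
\]
which rearranges to $|\mathcal{F}|\leq \binom{n-1}{k-1}$, as desired.

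Finally, for sharpness I would exhibit the star $\mathcal{F}=\{F\in [n]^{(k)}\colon 1\in F\}$, which is clearly intersecting and has exactly $\binom{n-1}{k-1}$ elements. Alternative routes, such as the shifting/compression method or a direct induction on $n$, are available; however, Katona's cycle argument is the shortest and most self-contained, and it foreshadows the kind of double-counting identities that arise in the measure-theoretic setting later developed in the paper.
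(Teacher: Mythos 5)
Your overall route (Katona's cycle method) is a perfectly good way to prove this statement; note that the paper does not prove it at all but quotes it from Erd\H{o}s--Ko--Rado, so there is no internal proof to compare with. The double count is also correct: each $F\in\mathcal{F}$ occurs as a $k$-arc in exactly $k!(n-k)!$ of the $(n-1)!$ cyclic orderings, so the local bound ``at most $k$ arcs of any fixed cyclic ordering lie in $\mathcal{F}$'' does yield $|\mathcal{F}|\le\binom{n-1}{k-1}$, and the star shows sharpness. The genuine gap is in your justification of that local bound. It is true (for $n\ge 2k$) that two $k$-arcs meet iff their starting positions are at cyclic distance at most $k-1$, but your next assertion --- that a set of positions which are pairwise at cyclic distance at most $k-1$ must lie in a window of $k$ consecutive positions --- is false precisely in the delicate range $n$ close to $2k$. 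For example, with $n=8$, $k=4$, the arcs $\{1,2,3,4\}$, $\{4,5,6,7\}$, $\{6,7,8,1\}$ pairwise intersect, yet their starting positions $1,4,6$ are pairwise at distance at most $3$ and are contained in no window of $4$ consecutive positions; more generally, for $k\ge 3$ and $2k\le n\le 3k-3$ the starting positions $1,\,k,\,2k-1$ give a counterexample. So the Helly-type property of the cyclic metric that you invoke simply does not hold where the theorem is hardest (it only becomes valid once $n$ is on the order of $4k$), and as written the central lemma is unproved for those $n$.

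The lemma itself is nevertheless true, and the standard repair is a pairing argument rather than a window argument: fix one arc $A=\{x_1,\dots,x_k\}$ of the cyclic ordering belonging to $\mathcal{F}$. Every other arc of this ordering that meets $A$ either begins at some $x_i$ with $2\le i\le k$ or ends at some $x_{i-1}$ with $2\le i\le k$, and for each such $i$ the arc beginning at $x_i$ and the arc ending at $x_{i-1}$ are disjoint because $n\ge 2k$. Hence at most one arc from each of these $k-1$ pairs can lie in the intersecting family $\mathcal{F}$, giving at most $1+(k-1)=k$ arcs of the ordering in $\mathcal{F}$. With this substitution for your window claim, the rest of your double counting is complete and correct.
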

	
	%Here I mention the extremal family, so we can say why there is some analogy with the case of the product measure. 
	Observe that this maximum is attained by a family which contains all the sets of size~$k$ that contain one fixed element, for instance~$\mathcal F = \{F\in [n]^{(k)} \colon 1\in F\}$. 
	
	As a variation \emph{cross intersecting} families can be considered.
	For~$r,t,n\in\mathds{N}$ we say that families~$\mathcal{F}_1,\dots,\mathcal{F}_r\subseteq\ccP([n])$ are~\emph{$r$-cross~$t$-intersecting} if for all~$F_1\in\mathcal{F}_1,\ldots,F_r\in\mathcal{F}_r$ we have~$\vert \bigcap_{i\in[r]}F_i\vert\geq t$.
	Now it is natural to ask for the maximum of~$\sum_{i\in[r]}\vert\mathcal{F}_i\vert$ taken over all non-empty~$r$-cross~$t$-intersecting families~$\mathcal{F}_1,\ldots,\mathcal{F}_r$.
	In this regime there are several partial results  concerning the maximal sizes of~$r$-cross~$t$-intersecting families for specific instances of~$r$ and~$t$, starting with theorems by Hilton~\cite{Hilton1} and by Hilton and Milner~\cite{HiltonMilner} and continued, for instance, in~\cite{FranklandTokushigeNonuniform, FranklKupavskii, FranklWong, ShiFranklQian, SudaTanaka} (also see the references therein). We determine~$\sum_{i\in[r]}\vert\mathcal{F}_i\vert$ for every~$r \geq 2 $ and~$t \geq 1 $ for both uniform families and non-uniform families (see Corollary~\ref{cor:uniform} and Corollary~\ref{cor:nonuniform}), solving several problems posed by Shi, Frankl, Qian~\cite{ShiFranklQian} and generalising a result by Frankl and Wong H.W.~\cite{FranklWong}.
	
	In fact, we show these results in the more general setting of measures.
	Here, one can ask for the maximum measure of families instead of their sizes. 
	More formally, consider a function~$\mu:\ccP([n])\to\mathds{R}_{\geq 0}$, that is, a map assigning a weight to each set in~$\ccP([n])$.
	Now, instead of asking for the maximal size of an intersecting family, we ask for the maximal measure of an intersecting family, where the measure of a family~$\mathcal{F}\subseteq\ccP([n])$ is defined as~$\mu(\mathcal{F})=\sum_{F\in\mathcal{F}}\mu(F)$.
	Two commonly considered measures are the product measure $\varrho_p$ and the uniform measure $\nu_k$.
	For~$p\in[0,1]$ we define the product measure as~$\varrho_p(F)=p^{\vert F\vert}(1-p)^{n-\vert F\vert}$, where~$F\in \ccP([n])$.
	Note that this can be interpreted as the probability that a specific set~$F$ is the result of a random experiment which includes each element from~$[n]$ with probability~$p$ in~$F$.
	The uniform measure~$\nu_k$, with~$k\in [n]$, is defined as~$\nu_k(F)=1/\binom{n}{k}$ if~$\vert F\vert=k$ and~$\nu_k(F)=0$ if~$\vert F\vert\neq k$.
	
	For these measures analogues of the Erd\H{o}s-Ko-Rado theorem can be considered. 
	Indeed, we can reformulate Theorem~\ref{thm:EKR} as follows: For~$k,n\in\mathds{N}$ with~$2k\leq n$ and an intersecting family~$\mathcal{F}\subseteq[n]^{(k)}$ it follows that~$\nu_k(\mathcal{F})\leq\frac{k}{n}$.
	For the product measure the following analogous result was first proved in \cite{productEKR}.
	For~$p\leq 1/2$ and an intersecting family~$\mathcal{F}\subseteq\ccP([n])$ we have~$\varrho_p(\mathcal{F})\leq p$. 
	
	Several results for specific measures and (cross) intersecting families are known, see \cite{AK3,BeyEngel,TokushigeproductAK,DinurSafra,Filmus1,Filmus2}.
	For a more thorough overview we %refer the readers 
	recommend Chapter~12 in~\cite{FranklTokushige}.
	In particular, a result due to Borg~\cite{Borgcrosstintprod} determines the maximum product of measures of~$2$-cross~$t$-intersecting families.
	
	Since normally the measures considered depend only on the size of the sets, we will introduce the following abuse of notation.
	For a function~$\mu:[n]_0\to\mathds{R}_{\geq 0}$ and a set~$F\subseteq [n]$ we consider the measure~$F\mapsto\mu(\vert F\vert)$ but omit the vertical lines, i.e., we write~$\mu(F)$ instead of~$\mu(\vert F\vert)$ and we refer to~$\mu$ as a measure. 
	Further, for~$\mathcal{F}\subseteq\ccP([n])$ we will write~$\mu(\mathcal{F})=\sum_{F\in\mathcal{F}}\mu(F)$.
	
	In our main results we determine the maximum sum of measures of~$r$-cross~$t$-intersecting families.
	Given~$n, a, t \in \mathbb N$ with~$n\geq a\geq t$ consider the families
	\begin{align*}
		\scatfam{n}{a}{t} &= \{F\in \ccP([n]) \colon \vert F\cap [a]\vert \geq t\} \\ 
		\blockfam{n}{a} &= \{F\in \ccP([n]) \colon [a]\subseteq F\}
	\end{align*}
	Essentially, our main results states that the maximum is attained by families ``derived'' from~$\scatfam{n}{a}{t}$ and~$\blockfam{n}{a}$, even when we consider different kinds of measures (including~$\nu_k$ and~$\varrho_p$ when~$p\leq 1/2$).
	Given a set~$A$ we write~$A^{(k)}$ for the set of~$k$-element subsets of~$A$ and similarly~$A^{(\leq k)}$ for the set containing all subsets of~$A$ that are of size at most~$k$.
	Further, for~$\mathcal{F}\subseteq \ccP([n])$ and~$k\in\mathds{N}$ we set~$\mathcal{F}^k=\{F \in \mathcal{F}: \vert F\vert=k\}$ and~$\mathcal{F}^{\leq k}=\{F \in \mathcal{F}: \vert F\vert \leq k\}$.
	Let~$r$ be an integer with~$r\geq 2$, for every~$i\in[r]$ let~$k_i\in\mathds{N}$, and let~$j\in[r]$ such that~$k_j=\min_{i\in[r]}k_i$.
	Then we write~$\underset{i\in[r]}{\secmin}k_i=\displaystyle\min_{i\in[r]\setminus j}k_i$.
	Let us now state our first result which in particular determines the maximum of~$\sum_{i\in[r]}\vert\mathcal{F}_i\vert$ for~$k$-uniform families.
	
	\begin{theorem}\label{thm: main1}
		Let~$r,t,n\in\mathds{N}$ with~$r\geq 2$.
		For~$i\in[r]$ let~$k_i\in[n]$,~$\mu_i:[n]_0\to\mathds{R}_{\geq 0}$, and~$\mathcal{F}_i\subseteq[n]^{(\leq k_i)}$ such that~$n\geq 2\displaystyle\max_{i\in[r]}k_i+\underset{i\in[r]}{\secmin}k_i-t$.
		If~$\mathcal{F}_1,\dots, \mathcal{F}_r$ are non-empty~$r$-cross~$t$-intersecting families, then
		\begin{align}\label{eq:maxfam}
			\sum_{j\in [r]}\mu_j(\mathcal{F}_j)\leq \max\Big\{\mu_{\ell}(\scatfam{n}{a}{t}^{\leq k_{\ell}})+\sum_{j\in[r]\setminus\ell}\mu_j(\blockfam{n}{a}^{\leq k_j})\Big\}\,,
		\end{align}
		where the maximum is taken over~$\ell\in[r]$ and~$a\in\Big[t,\displaystyle\min_{i\in[r]\setminus\ell}k_i\Big]$.
	\end{theorem}
	Note that $\scatfam{n}{i}{t}$ together with $r-1$ copies of $\blockfam{n}{i}$ are $r$-cross $t$-intersecting for every $i\geq t$.
	Thus, this result is sharp in the sense that there are~$r$-cross~$t$-intersecting families which attain the bound.
	
	As mentioned above, applying Theorem~\ref{thm: main1} with~$k_i=k$ and the measure~$\mu_i = \nu_k\binom{n}{k}$ for every~$i\in [r]$, we obtain the following result for~$k$-uniform families.
	\begin{cor}\label{cor:uniform}
		Let~$r\geq 2$, and~$n,t\geq 1$ be integers,~$k\in[n]$, and for~$i\in[r]$ let~$\mathcal{F}_i\subseteq [n]^{(k)}$.
		If~$\mathcal{F}_1,\dots,\mathcal{F}_r$ are non-empty~$r$-cross~$t$-intersecting families and~$n\geq 3k-t$, then 
		$$\sum_{j\in [r]}\vert\mathcal{F}_j\vert
		\leq 
		\max_{m\in[t,k]}\Big\{ \sum_{i\in[t,k]}{m \choose {i}} \cdot {{n-m} \choose {k-i}} +(r-1){{n-m} \choose {k-m}}\Big\}\, $$ and this bound is attained.
	\end{cor}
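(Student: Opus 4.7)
The plan is to derive Corollary~\ref{cor:uniform} as a direct specialisation of Theorem~\ref{thm: main1}. I would set, for each $i\in[r]$, the measure $\mu_i\colon[n]\to\mathds{R}_{\geq 0}$ to be the indicator of $k$, namely $\mu_i(k)=1$ and $\mu_i(s)=0$ for $s\neq k$; this corresponds to $\binom{n}{k}\nu_k$ under the paper's abuse of notation. Setting $k_i=\widehat{k}_i=k$ for every $i\in[r]$, the monotonicity hypothesis on $\mu_i$ becomes vacuous since $[k_i,\widehat{k}_i]=\{k\}$, the containment $\mathcal{F}_i\subseteq[n]^{(\leq\widehat{k}_i)}$ is automatic because $\mathcal{F}_i\subseteq[n]^{(k)}$, and the numeric condition $n>\max_{i}(k_i+\min_{j\neq i}\widehat{k}_j)-t$ reduces to $n>2k-t$, which is precisely the hypothesis of the corollary.

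Next, I would translate each quantity appearing in Theorem~\ref{thm: main1}. Since every $\mathcal{F}_j$ lives in $[n]^{(k)}$, we have $\mu_j(\mathcal{F}_j)=|\mathcal{F}_j|$, so the left-hand side equals $\sum_{j\in[r]}|\mathcal{F}_j|$. For the right-hand side, elementary counting yields
\[
\mu_\ell\bigl(\scatfam{n}{a}{t}^{\leq k}\bigr)=\bigl|\{F\in[n]^{(k)}:|F\cap[a]|\geq t\}\bigr|=\sum_{m=t}^{k}\binom{a}{m}\binom{n-a}{k-m},
\]
where $\binom{a}{m}=0$ for $m>a$, together with
\[
\mu_j\bigl(\blockfam{n}{a}^{\leq k}\bigr)=\bigl|\{F\in[n]^{(k)}:[a]\subseteq F\}\bigr|=\binom{n-a}{k-a}.
\]
Because the measures $\mu_i$ are identical across $i$, the expression inside the maximum in Theorem~\ref{thm: main1} is independent of $\ell$; hence the outer maximum over $\ell\in[r]$ collapses, and only the maximum over $a\in[t,\min_{j\neq\ell}\widehat{k}_j]=[t,k]$ remains. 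Renaming $a$ to $i$ gives precisely the bound claimed in Corollary~\ref{cor:uniform}.

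Sharpness transfers for free: the extremal configuration furnished by Theorem~\ref{thm: main1} — one family equal to $\scatfam{n}{a}{t}\cap[n]^{(k)}$ and the remaining $r-1$ families equal to $\blockfam{n}{a}\cap[n]^{(k)}$ — is $r$-cross $t$-intersecting for every $a\in[t,k]$ and realises the asserted sum. There is no genuinely hard step here; the argument is a verification of hypotheses plus two elementary binomial identities. The only point worth flagging is that capping the inner summation at $k$ rather than at $\min(a,k)=a$ in the stated bound is harmless precisely because $\binom{a}{m}$ vanishes for $m>a$.
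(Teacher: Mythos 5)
Your proposal is correct and is exactly the paper's route: the authors obtain Corollary~\ref{cor:uniform} by applying Theorem~\ref{thm: main1} with $\mu_i=\binom{n}{k}\nu_k$ and $k_i=\widehat{k}_i=k$, which is your specialisation, and your translation of the hypotheses and of $\mu_\ell(\scatfam{n}{a}{t}^{\leq k})$, $\mu_j(\blockfam{n}{a}^{\leq k})$ into the stated binomial expressions (including the harmless capping of the sum at $k$) just spells out the computation the paper leaves implicit.
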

	
	Some special cases of this result were obtained before.
	For~$r=2$ and~$t\geq 1$ Corollary~\ref{cor:uniform} was proved by Frankl and Kupavskii~\cite{FranklKupavskii}.
	For~$t=1$ and~$r \geq 2$ Corollary~\ref{cor:uniform} was shown very recently in independent work by Shi, Frankl, and Qian \cite{ShiFranklQian}, where they deduce it from a result about two families by an elegant application of the Kruskal-Katona theorem.
	
	Note that, in fact, Theorem~\ref{thm: main1} also determines the maximum of~$\sum_{i\in[r]}\vert\mathcal{F}_i\vert$ for families of different uniformities~$k_1,\ldots,k_r$.
	This solves a problem posed by Shi, Frankl, and Qian~\cite{ShiFranklQian} (apart from the range~$2\displaystyle\max_{i\in[r]}k_i-t<n<2\displaystyle\max_{i\in[r]}k_i+\underset{i\in[r]}{\secmin}k_i-t$).
	
	In the context of non-uniform families, one of the results of a very recent work by Frankl and Wong H.W.~\cite{FranklWong} establishes the maximum possible size of~$2$-cross~$t$-intersecting families.
	The following theorem generalises their result for all~$r\geq 2$ and for measures.
	\begin{theorem}\label{thm: main2}
		Let~$r,t,n\in\mathds{N}$ with~$r\geq 2$.
		For~$i\in[r]$ let~$\mu_i:[n]_0\to\mathds{R}_{\geq 0}$ be non-increasing, and let~$\mathcal{F}_i\subseteq\ccP([n])$.
		If~$\mathcal{F}_1,\dots, \mathcal{F}_r$ are non-empty~$r$-cross~$t$-intersecting families, then
		\begin{align}
			\sum_{j\in [r]}\mu_j(\mathcal{F}_j)\leq \max\Big\{\mu_{\ell}(\scatfam{n}{a}{t})+\sum_{j\in[r]\setminus\ell}\mu_j(\blockfam{n}{a})\Big\}\,,
		\end{align}
		where the maximum is taken over~$\ell\in[r]$ and~$a\in[t,n]$.
	\end{theorem}
	As before, this bound is attained.
	Note that by taking~$\mu_i\equiv 1$ for every~$i\in [r]$ we obtain the maximum of~$\sum_{i\in[r]}\vert\mathcal{F}_i\vert$ for~$r$-cross~$t$-intersecting families. 
	
	\begin{cor}\label{cor:nonuniform}
		Let~$r\geq 2$,~$n,t\geq 1$ be integers and let~$\mathcal{F}_1,\dots ,\mathcal{F}_r\subseteq \ccP([n])$ be non-empty~$r$-cross~$t$-intersecting families. Then,
		$$\sum_{j\in [r]}\vert\mathcal{F}_j\vert\leq \max_{m\in [t,n]}\Big\{2^{n-m}\sum_{i\in[t,m]}\binom{m}{i}+(r-1)2^{n-m}
		\Big\}\,$$ and this bound is attained.
	\end{cor}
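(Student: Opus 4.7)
The plan is to deduce Corollary~\ref{cor:nonuniform} as a direct specialisation of Theorem~\ref{thm: main1} by taking $\mu_i\equiv 1$, $k_i=1$, and $\widehat{k}_i=n$ for every $i\in[r]$. Each constant $\mu_i$ is trivially non-increasing on $[1,n]$, so the monotonicity hypothesis holds automatically. The size hypothesis $n>\max_{i\in[r]}(k_i+\min_{j\neq i}\widehat{k}_j)-t$ becomes $n>n+1-t$, equivalently $t\geq 2$, so the theorem applies without further work in that regime.

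With this choice, the truncations $\scatfam{n}{a}{t}^{\leq\widehat{k}_\ell}$ and $\blockfam{n}{a}^{\leq\widehat{k}_j}$ coincide with $\scatfam{n}{a}{t}$ and $\blockfam{n}{a}$ themselves. Partitioning $F\in\scatfam{n}{a}{t}$ by $m:=|F\cap[a]|\in[t,a]$ yields $|\scatfam{n}{a}{t}|=\sum_{m=t}^{a}\binom{a}{m}\,2^{n-a}$, while $|\blockfam{n}{a}|=2^{n-a}$ simply counts the supersets of $[a]$ inside $[n]$. Substituting these formulas into the bound of Theorem~\ref{thm: main1} and relabelling $a$ as $i$ reproduces the expression claimed in the corollary. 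Sharpness is witnessed by the extremal configuration in which one family equals $\scatfam{n}{i^\ast}{t}$ and the remaining $r-1$ families equal $\blockfam{n}{i^\ast}$ for an optimiser $i^\ast$ of the right-hand side.

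The main obstacle is that the parameter choice above satisfies the hypothesis of Theorem~\ref{thm: main1} only when $t\geq 2$, so the boundary case $t=1$ needs a short separate argument. For $r=2$, $t=1$ the bound $|\mathcal{F}_1|+|\mathcal{F}_2|\leq 2^n$ is classical (for every $F\subseteq[n]$, having $F\in\mathcal{F}_1$ forbids $F^c\in\mathcal{F}_2$), and it matches the corollary's formula at $i=1$. For $r\geq 3$ and $t=1$, I would proceed by induction on $r$: either some $\mathcal{F}_i=\{[n]\}$, in which case the remaining $r-1$ families are $(r-1)$-cross $1$-intersecting and the inductive hypothesis gives the bound, or else each $\mathcal{F}'_i:=\mathcal{F}_i\setminus\{[n]\}$ is non-empty and $r$-cross $1$-intersecting, allowing Theorem~\ref{thm: main1} to be applied with $\widehat{k}_i=n-1$ (under which the size hypothesis reads $n>n-1$), after which the at most $r$ missing contributions from $[n]$ are absorbed, since $[n]$ lies in both $\scatfam{n}{a}{1}$ and $\blockfam{n}{a}$ for every $a\geq 1$.
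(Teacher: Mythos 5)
Your proposal is correct and follows essentially the same route as the paper, which obtains the corollary by specialising Theorem~\ref{thm: main1} to $\mu_i\equiv 1$, $k_i=1$, $\widehat{k}_i=n$ and computing $\vert\scatfam{n}{i}{t}\vert=2^{n-i}\sum_{m\in[t,i]}\binom{i}{m}$ and $\vert\blockfam{n}{i}\vert=2^{n-i}$. Your extra observation is a genuine point the paper passes over silently: with these parameters the hypothesis $n>\max_{i\in[r]}(k_i+\min_{j\neq i}\widehat{k}_j)-t$ reduces to $t\geq 2$, and your patch for $t=1$ (the classical complement bound for $r=2$; for $r\geq 3$ either discarding a family equal to $\{[n]\}$ and inducting, or deleting $[n]$ from every family so that Theorem~\ref{thm: main1} applies with $\widehat{k}_i=n-1$ and then re-absorbing the at most $r$ removed sets, which the truncated extremal families lose as well) is a valid repair of that boundary case.
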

	
	For a further application, note that Theorem~\ref{thm: main2} also provides the maximum for the product measure~$\rho_p$, if~$p\leq 1/2$.
	
	As it turns out, our proofs for Theorems~\ref{thm: main1} and~\ref{thm: main2} are essentially the same and we derive them as special cases of the more general Proposition~\ref{prop: propgen}.
	This proposition even considers the case of~$r$-cross~$t$-intersecting families~$\mathcal{F}_1,\ldots,\mathcal{F}_r$ when some of them satisfy the conditions of Theorem~\ref{thm: main1} and some others satisfy the conditions of Theorem~\ref{thm: main2}.
	
%	Observe that the problem, which considers only one $t$-intersecting family is not covered by Theorem~\ref{thm: main1}. 
%	The following theorem gives the maximal measure of an intersecting family for several kinds of measures.
%	As mentioned above, this solves a problem posed by~Frankl and Tokushige~\cite{FranklTokushige} by generalising the results for the uniform measure and the product measure. 
%	
%	\begin{theorem}\label{thm:main2}
%		Let~$n\in\mathds{N}$,~$\widehat{k}\in[n]$, and~$k\in[\widehat{k}]_0$, let~$\mathcal{F}\subseteq [n]^{(\leq \widehat{k})}$ be an intersecting family, and let~$\mu: [n]_0\to\mathds{R}_{\geq0}$ such that~$\mu$ is non-increasing on~$[k,\widehat{k}]$.
%		If~$n\geq k+\widehat{k}$, then 
%		$$\mu(\mathcal{F})\leq \mu(\blockfam{n}{1})\,.$$
%	\end{theorem}
%	

	\subsection{Idea of the proof}
%	In this subsection we summarise the strategy of our proof.
%	We will consider \emph{\NIP s} which will be defined in Section~\ref{sec:notation}.
%	The idea is to ``decrease'' the maximal \NIP~as long as possible, i.e., replace the presently considered~$r$-cross~$t$-intersecting families by~$r$-cross~$t$-intersecting families whose sum of measures is not smaller but which have a smaller maximal \NIP.
%	The nature of a maximal necessary intersection point~$a_*\in[n]$ will make it easy to analyse which sets still intersect within~$[a_*-1]$.
%	Further, it also enables us to analyse which new sets one can add to (some of) the families without breaking the intersection property when we delete a few other sets ``depending on''~$a_*$.
%	Therefore, we can ensure that during this replacement the sum of the measures of the families does not decrease.
%	If such a replacement is not possible any longer, the families will be contained in families with the desired structure.
	
	Our proof is based on what we call \emph{\NIP s} (see Definition \ref{def:nip}).
	Roughly speaking we say that a vertex~$a\in[n]$ is a \NIP~for~$r$-cross~$t$-intersecting families~$\mathcal{F}_1,\dots,\mathcal{F}_r$ if there are sets in the families which ``depend'' on this vertex to fulfil their intersection property.
	%So the maximal necessary intersection point is the minimal~$a\in[n]$ for which the families ``induced on''~$[a]$ are still~$r$-cross~$t$-intersecting.
	For example, if we consider the~$2$-cross~$1$-intersecting families~$\scatfam{n}{2}{1}$ and~$\blockfam{n}{2}$, the vertex~$2$ is a \NIP~because there are pairs of sets that intersect only in~$2$. 
	In this case, $1$ and $2$ are the only \NIP s of these families.
	The idea is to ``decrease'' the maximal \NIP~as long as possible, i.e., replace the presently considered~$r$-cross~$t$-intersecting families by~$r$-cross~$t$-intersecting families whose sum of measures is not smaller but which have a smaller maximal \NIP.
	
	Let~$\mathcal F_1, \dots, \mathcal F_r$ be some~$r$-cross~$t$-intersecting families and let~$a\in[n]$ be their maximal \NIP.
	To construct the new families we first remove all sets that ``depend'' on~$a$ in one family, say~$\mathcal{F}_r$; we call the family of these sets~$\mathcal{F}_r(a)$. 
	Then~$a$ will no longer be a \NIP.
	Potentially, there are some subsets of~$[n]$ which could not be in any of the other families because they would not intersect ``correctly'' with some set in~$\mathcal{F}_r(a)$.
	However, after removing~$\mathcal F_r(a)$ from~$\mathcal{F}_r$ and depending on how such a set relates with~$\mathcal{F}_r\setminus\mathcal{F}_r(a)$, it may be added to one of the other families without breaking the intersection property.
	% The idea then, is to add some new sets to the family, which still preserve the intersection property. 
	
	There are some structural properties that follow from~$a$ being the maximal \NIP~and the fact that the families are shifted.
	These will help us to analyse which new sets can actually be added to the families~$\mathcal{F}_1,\dots,\mathcal{F}_{r-1}$ and to prove that in fact the measure of the newly added sets is at least as large as the measure of the removed sets.
	Moreover, this analysis guarantees that the new maximal \NIP~is at most~$a-1$.
	
	We can iterate this construction and decrease the maximal \NIP~in every step.
	This process has to stop at a certain point, and we show that then the resulting families are contained in families with the desired structure (namely~$\scatfam{n}{a}{t}$ and~$\blockfam{n}{a}$).
	
%	As an illustration we first deal with the proof of~\ref{thm:main2} in Section~\ref{sec:proofmain2} before giving the proof of Theorem~\ref{thm: main1} in Section~\ref{sec:proofmain}.
%	

	\section{Notation and preliminaries}\label{sec:notation}
	In this section we introduce the notation and some well-known facts about shifting. 
	For a set~$A$ we denote the power set of~$A$ as~$\ccP(A)=\{B : B\subseteq A\}$.
	Let~$j\in\mathds{N}$, then set~$[j]=\{1,\dots ,j\}$,~$[j]_0=[j]\cup\{0\}$, and for~$i\in[j]_0$ set~$[i,j]=\{i,i+1,\dots ,j\}$. For a set with a single element, say $\{i\}$, we sometimes just write $i$.
	%To avoid an overload of notation we sometimes omit the parentheses around singletons, e.g., for~$i,j\in[n]$ we may write~$[i]\cup j$ instead of~$[i]\cup\{j\}$. 
	Given a set~$A$ we write~$A^{(k)}$ for the set of~$k$-element subsets of~$A$ and similarly~$A^{(\leq k)}$ for the set containing all subsets of~$A$ that are of size at most~$k$. 
	As mentioned in the introduction, for a function~$\mu:[n]_0\to\mathds{R}_{\geq 0}$ and a set~$F\subseteq [n]$ we consider the measure~$F\mapsto\mu(\vert F\vert)$ but omit the vertical lines, i.e., we write~$\mu(F)$ instead of~$\mu(\vert F\vert)$ and refer to~$\mu$ as a measure. 
	Further, for~$\mathcal{F}\subseteq\ccP([n])$ we will write~$\mu(\mathcal{F})=\sum_{F\in\mathcal{F}}\mu(F)$.
	
	% We define two special families which are of particular interest to us. 
	% For $n, i, t \in \mathbb N$ with $t\leq i\leq n$ we define the families $\scatfam{n}{i}{t} = \{F\in \ccP([n]) \colon \vert F\cap [i]\vert \geq t\}$ and $\blockfam{n}{i}= \{F\in \ccP([n]) \colon [i]\subseteq F\}$.
	% Let~$\mathcal{F}\subseteq \ccP([n])$ and~$k\in\mathds{N}$, then we say~$\mathcal{F}^k=\{F \in \mathcal{F}: \vert F\vert=k\}$ and~$\mathcal{F}^{\geq k}=\{F \in \mathcal{F}: \vert F\vert \geq k\}$.
	% The families~$\mathcal{F}^{>k}$,~$\mathcal{F}^{\leq k}$, and~$\mathcal{F}^{<k}$ are similarly defined.
	
	We now recall the well-known shifting technique.
	For~$F\subseteq[n]$ and~$i,j\in [n]$ we set $$\sigma_{ij}(F)=\begin{cases}
		F\setminus\{j\}\cup\{i\} \text{ if }j\in F\text{ and } i\notin F \\
		F\text{ otherwise }
	\end{cases}\,,$$
	and note that~$\vert \sigma_{ij}(F)\vert=\vert F\vert$.
	Moreover, for a given~$\mathcal{F}\subseteq\ccP([n])$ we define the family~$\sigma_{ij}(\mathcal{F})=\{\sigma_{ij}(F):F\in\mathcal{F}\}\cup\{F\in\mathcal{F}:\sigma_{ij}(F)\in\mathcal{F}\}$ and note that~$\vert\sigma_{ij}(\mathcal{F})\vert=\vert\mathcal{F}\vert$.
	Further, it can easily be checked that if~$\mathcal{F}\subseteq\ccP([n])$ is intersecting, then~$\sigma_{ij}(\mathcal{F})$ is also intersecting.
	We say that~$\mathcal{F}\subseteq\ccP([n])$ is \emph{shifted} if for all~$i,j\in[n]$ with~$i<j$ we have~$\sigma_{ij}(\mathcal{F})=\mathcal{F}$, i.e., for all~$F\in\mathcal{F}$ we have that~$\sigma_{ij}(F)\in\mathcal{F}$.
	By shifting an intersecting family~$\mathcal{F}\subseteq\ccP([n])$ repeatedly, that is, replacing~$\mathcal{F}$ by~$\sigma_{ij}(\mathcal{F})$ repeatedly for all~$i,j\in[n]$ with~$i<j$, we obtain an intersecting family~$\mathcal{F}'$ that is shifted and for which we have~$\vert\mathcal{F}'\vert=\vert\mathcal{F}\vert$ and~$\vert F'^{k}\vert = \vert F^{k} \vert$.
	Thus, to determine the maximal size of an intersecting family, one can restrict themselves to shifted families.
	
	Moreover, for the sake of completeness, we prove the following fact.
	\begin{fact}
		Let~$a,b\in[n]$. 
		If~$\mathcal{F}_1,\dots,\mathcal{F}_r\subseteq\ccP([n])$ are~$r$-cross~$t$-intersecting, then the families~$\sigma_{ab}(\mathcal{F}_1),\ldots,\sigma_{ab}(\mathcal{F}_r)$ are~$r$-cross~$t$-intersecting.
	\end{fact}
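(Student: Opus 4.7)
My plan is a proof by contradiction. Suppose $\sigma_{ab}(\mathcal{F}_1),\ldots,\sigma_{ab}(\mathcal{F}_r)$ are not $r$-cross $t$-intersecting, so there exist $G_i\in\sigma_{ab}(\mathcal{F}_i)$ with $\big|\bigcap_{i\in[r]}G_i\big|<t$. From these I will build witnesses $F_i\in\mathcal{F}_i$, each agreeing with $G_i$ outside $\{a,b\}$, such that $\big|\bigcap_{i\in[r]}F_i\big|<t$ as well, contradicting the cross $t$-intersecting property of $\mathcal{F}_1,\dots,\mathcal{F}_r$. Unpacking the definition of $\sigma_{ab}(\mathcal{F}_i)$, for each $i$ either $G_i\in\mathcal{F}_i$, or else $G_i=\sigma_{ab}(F_i)\neq F_i$ for some $F_i\in\mathcal{F}_i$ with $b\in F_i$, $a\notin F_i$, which forces $a\in G_i$, $b\notin G_i$ and $H_i:=(G_i\setminus\{a\})\cup\{b\}\in\mathcal{F}_i$. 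Let $T_2$ collect the indices of the second type and set $T_1:=[r]\setminus T_2$.

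The naive first attempt is $F_i:=G_i$ for $i\in T_1$ and $F_i:=H_i$ for $i\in T_2$. If $T_2=\emptyset$ this immediately gives $\big|\bigcap_i F_i\big|=\big|\bigcap_i G_i\big|<t$, a contradiction. Otherwise, writing $X:=\bigcap_i G_i\cap([n]\setminus\{a,b\})$, the two intersections $\bigcap_i F_i$ and $\bigcap_i G_i$ agree outside $\{a,b\}$; moreover the presence of an index in $T_2$ forces $a\notin\bigcap_i F_i$ and $b\notin\bigcap_i G_i$. A short case split then settles most situations: if some $i\in T_1$ satisfies $b\notin G_i$, then also $b\notin\bigcap_i F_i$, giving $\big|\bigcap_i F_i\big|=|X|\leq\big|\bigcap_i G_i\big|<t$; and if both $b\in G_i$ and $a\in G_i$ hold for every $i\in T_1$, then both intersections have size $|X|+1$, again contradicting $\big|\bigcap_i G_i\big|<t$.

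The remaining subcase, which I expect to be the main obstacle, is when $b\in G_i$ for every $i\in T_1$ but $a\notin G_{i_0}$ for some $i_0\in T_1$. Here I will exploit the two-clause nature of the definition of $\sigma_{ab}(\mathcal{F}_{i_0})$: a brief check shows that, since $G_{i_0}$ has $a\notin G_{i_0}$ and $b\in G_{i_0}$, no $F\in\mathcal{F}_{i_0}$ can satisfy $\sigma_{ab}(F)=G_{i_0}$, so the membership $G_{i_0}\in\sigma_{ab}(\mathcal{F}_{i_0})$ must come from the second clause, i.e.\ $\sigma_{ab}(G_{i_0})\in\mathcal{F}_{i_0}$. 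Re-selecting $F_{i_0}:=\sigma_{ab}(G_{i_0})\in\mathcal{F}_{i_0}$ removes $b$ from $F_{i_0}$, so now $b\notin\bigcap_i F_i$ as well; combined with $a\notin\bigcap_i F_i$, this yields $\big|\bigcap_i F_i\big|=|X|\leq\big|\bigcap_i G_i\big|<t$, the desired final contradiction.
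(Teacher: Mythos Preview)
Your proof is correct and follows essentially the same approach as the paper's: both argue by contradiction, pull each $G_i$ back to an $F_i\in\mathcal{F}_i$ agreeing with it outside $\{a,b\}$, and in the residual case exploit the second clause of the definition of $\sigma_{ab}(\mathcal{F})$ to replace one $F_{i_0}$ by $\sigma_{ab}(G_{i_0})\in\mathcal{F}_{i_0}$. The only cosmetic difference is that the paper reaches that residual case via the inequality chain $t-1\leq |X|\leq |\bigcap G_i|-0<t$ (forcing $b\in\bigcap F_i$ and then producing the index $\ell$ with $a\notin F'_\ell$), whereas you arrive there through an explicit three-way case split on the membership of $a$ and $b$ in the $G_i$ with $i\in T_1$; the underlying logic is the same.
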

	\begin{proof}
		Assume the contrary and let~$F'_{i}\in\sigma_{ab}(\mathcal{F}_i)$ for every~$i\in[r]$ such that~$\vert\bigcap_{i\in[r]} F'_i\vert<t$.
		For every~$i\in[r]$, let~$F_i=F'_i$ if~$F'_i\in\mathcal{F}$.
		If~$F'_i\notin\mathcal{F}$, we know that~$a\in F_i'$ and~$b\notin F_i'$ and we set~$F_i=\sigma_{ba}(F'_i)\in\mathcal{F}_i$.
		Since~$\mathcal{F}_1,\dots,\mathcal{F}_r$ are~$r$-cross~$t$-intersecting, we have~$\vert\bigcap_{i\in[r]}F_i\vert\geq t$ and so there is some~$j\in[r]$ such that~$F_j=\sigma_{ba}(F'_j)\neq F_j'$.
		But then we have~$a\notin F_j$ and, thus,~$a\notin\bigcap_{i\in[r]}F_i$.
		This yields that
		\begin{align}\label{eq:shifting}
			t-1\leq\vert\bigcap_{i\in[r]}F_i\cap([n]\setminus\{a,b\})\vert=\vert\bigcap_{i\in[r]}F'_i\cap([n]\setminus\{a,b\})\vert\,.
		\end{align}
		Note that the assumption~$\vert\bigcap_{i\in[r]}F'_i\vert<t$ tells us that in fact the left side inequality above is an equality.
		This in turn implies that~$b\in\bigcap_{i\in[r]}F_i$.
		
		Our assumption together with~\eqref{eq:shifting} also give some~$\ell\in[r]$ such that~$a\notin F'_{\ell}$.
		Then it follows by definition that~$\sigma_{ab}(F_{\ell})\in\mathcal{F}_{\ell}$ because~$b\in\bigcap_{i\in [r]}F_i$.
		Hence,~$\vert\sigma_{ab}(F_{\ell})\cap\bigcap_{i\in[r]\setminus \ell }F_{i}\vert<t$ contradicts~$\mathcal{F}_1,\dots,\mathcal{F}_r$ being~$r$-cross~$t$-intersecting.
	\end{proof}
	This allows us to restrict ourselves to shifted families when looking for the maximum sum of measures of~$r$-cross~$t$-intersecting families if the measure of a set~$F$ depends only on the size of~$F$.

	\section{Proof of Theorems~\ref{thm: main1} and~\ref{thm: main2}}\label{sec:proofmain}
	We begin by introducing \NIP s which are central to our proofs.
	\begin{dfn}\label{def:nip}
		Let~$\mathcal{F}_1\subseteq\ccP([n]),\dots ,\mathcal{F}_r\subseteq\ccP([n])$ be~$r$-cross~$t$-intersecting families.
		We say~$a \in [n]$ is a \emph{\NIP} of~$\mathcal{F}_1,\dots ,\mathcal{F}_r$ if for all~$j\in[r]$ there is an~$F_j \in \mathcal{F}_j$ such that
		\begin{align}\label{eq:nipwit}
			|[a] \cap\bigcap_{j \in [r]}F_j|= t \qand a \in \bigcap_{j \in [r]}F_j \,.
		\end{align}
	\end{dfn}
	
	The following easy lemma is one of the useful properties of necessary intersection points used together with shifting.
	\begin{lemma}\label{lem:crosscover}
		Let~$\mathcal{F}_1\subseteq\ccP([n]),\dots ,\mathcal{F}_r\subseteq\ccP([n])$ be shifted~$r$-cross~$t$-intersecting families and let~$a$ be their maximal \NIP.
		If~$i\in[r]$,~$F\in\mathcal{F}_i$, and~$F_j\in\mathcal{F}_j$ for~$j\in[r]\setminus i$ are such that~$\vert [a-1]\cap F\cap \bigcap_{j\in[r]\setminus i}F_j\vert<t$, then~$[a-1]\subseteq F\cup\bigcap_{j\in[r]\setminus i}F_j$.
	\end{lemma}
	\begin{proof}
		We will assume that there is a~$b\in [a-1]\setminus(F\cup\bigcup_{j\in[r]\setminus i}F_j)$ and derive a contradiction.
		First let us note that~$a\in F$.
		If~$a\notin F$, we would have~$\vert [a]\cap F\cap \bigcap_{j\in[r]\setminus i}F_j\vert<t$.
		Since~$\mathcal{F}_1,\dots,\mathcal{F}_r$ are~$r$-cross~$t$-intersecting, this would imply that there is a \NIP~larger than~$a$.
		This in turn would be a contradiction to~$a$ being the maximal \NIP~of~$\mathcal{F}_1,\dots,\mathcal{F}_r$ and so we conclude that~$a\in F$.
		
		Further, we know that~$\sigma_{ba}(F)\in\mathcal{F}_i$ since~$\mathcal{F}_i$ is shifted and~$b<a$.
		But then we have~$\vert[a]\cap\sigma_{ba}(F)\cap\bigcap_{j\in[r]\setminus i}F_j\vert<t\,$, which again contradicts~$\mathcal{F}_1,\dots,\mathcal{F}_r$ being~$r$-cross~$t$-intersecting with maximal \NIP~$a$.
	\end{proof}

	Roughly speaking, the proof proceeds by iteratively decreasing the maximal \NIP, i.e., replacing the currently considered families by families with a smaller maximal \NIP.
	In this ``updating'' process we need to be careful with those sets which need~$a$ fulfil the intersection property.
	To make this more precise, we introduce the following notation.
	
	Let~$\mathcal{F}_1\subseteq\ccP([n]),\dots ,\mathcal{F}_r\subseteq\ccP([n])$ be~$r$-cross~$t$-intersecting families and let~$a$ be their maximal \NIP.
	For every~$j\in [r]$ define~$\mathcal{F}_j(a)$ to be the set of all~$F\in\mathcal{F}_j$ for which there exist~$F_{i}\in\mathcal{F}_{i}$ for every~$i\in[r]\setminus j$ such that~\eqref{eq:nipwit} holds.
	We also refer to the sets in~$\mathcal{F}_j(a)$ as the sets in~$\mathcal{F}_j$~\emph{depending on~$a$}. 
	Further, for~$A\subseteq[a-1]$ set~$\mathcal{F}_j(A,a)=\{F\in\mathcal{F}_j(a):F\cap[a-1]=A\}$.
	
	The following lemma is the key of our proof.
	It will allow us to ``push down'' the maximal \NIP~of the families considered in case that we are not already done.
	Since we will prove Theorem~\ref{thm: main1} and Theorem~\ref{thm: main2} simultaneously (by proving Proposition~\ref{prop: propgen}), we phrase this lemma in a general setting.
	The families with indices in~$[r_1]$ are families as in Theorem~\ref{thm: main1} and the remaining families are as in Theorem~\ref{thm: main2}.
	\begin{lemma}\label{lem:update}
		Let~$r,t,n\in\mathds{N}$,~$r_1\in\mathds{N}_0$ with~$r\geq r_1$,~$r\geq 2$, and~$r_1\neq 1$, and let~$a\in[n]$.
		If~$r_1\geq 2$, for~$i\in[r_1]$ let~$k_i\in[n]$ be such that~$n\geq 2\displaystyle\max_{i\in[r_1]}k_i+\underset{i\in[r_1]}{\secmin}k_i-t$, and let~$\mu_i:[n]_0\to\mathds{R}_{\geq 0}$.
		For~$i\in[r_1+1,r]$ set~$k_i=n$ and let~$\mu_i:[n]_0\to\mathds{R}_{\geq 0}$ be non-increasing. For~$i\in[r]$ let~$\mathcal{F}_i\subseteq [n]^{(\leq k_i)}$.
		If~$\mathcal{F}_1,\dots, \mathcal{F}_r$ are shifted~$r$-cross~$t$-intersecting families with maximal \NIP~$a\geq t+1$ such that for all~$i\in[r]$ the family~$\mathcal{F}_i\setminus\mathcal{F}_i(a)$ is non-empty, then
		there are non-empty families~$\mathcal H_1,\dots, \mathcal H_r$ such that
		\begin{enumerate}[label=(\alph*)]
			\item\label{it:setting} for~$i\in[r]$ we have $\mathcal{H}_i\subseteq [n]^{(\leq k_i)}$,
			\item\label{it:rcross} $\mathcal H_1,\dots,\mathcal H_r$ are~$r$-cross~$t$-intersecting with maximal \NIP~at most~$a-1$, and
			\item\label{it:size} $\displaystyle\sum_{j\in [r]}\mu_j (\mathcal H_j)\geq \displaystyle\sum_{j\in [r]}\mu_j (\mathcal F_j) $.
		\end{enumerate}
	\end{lemma}
	\begin{proof}
	
		Roughly speaking, the families~$\mathcal{H}_1,\dots,\mathcal{H}_r$ will be obtained from~$\mathcal{F}_1,\dots,\mathcal{F}_r$ by deleting~$\mathcal{F}_i(a)$ from some of them and adding new sets to the others.
		More precisely, define for every~$i\in[r_1]$ the family
		\begin{align}\label{eq:crossadd}
		\mathcal{F}_i^{\text{add}}=\bigdcup_{k\in[k_i]}\bigdcup_{\substack{A\subseteq[a-1]:\\ \mathcal{F}_i(A,a)^k\neq\emptyset}}\{A\cup T:T\in[a+1,n]^{(k-\vert A\vert)}\}\,.
		\end{align}
		and for~$i\in[r_1+1,r]$ define the family~$\mathcal{F}_i^{\text{add}}=\{F\setminus a:F\in\mathcal{F}_i(a)\}$.
		Next, for~$i\in[r]$ we set~$\mathcal{F}_i^-=\mathcal{F}_i\setminus\mathcal{F}_i(a)$ and~$\mathcal{F}_i^+=\mathcal{F}_i\cup\mathcal{F}_i^{\text{add}}$.
		Note that for all~$i\in[r]$ we have~$\mathcal{F}_i^+,\mathcal{F}_i^-\subseteq[n]^{(\leq k_i)}$ and, hence, they satisfy~\ref{it:setting}.
		
		We aim to show that considering~$\mathcal{F}_i^-$ for some indices and~$\mathcal{F}_j^+$ for the other indices will yield families as desired.
		To this end let us now observe the following claim, ensuring that such a collection will fulfil~\ref{it:rcross}.
		\begin{claim}\label{cl:crossstillint}
			Let~$i\in[r]$.
			\begin{enumerate}
				\item\label{it:crossstillint1} The families~$\mathcal{F}_1^-,\dots,\mathcal{F}_{i-1}^-,\mathcal{F}_i^+,\mathcal{F}_{i+1}^-,\dots,\mathcal{F}_r^-$ are~$r$-cross~$t$-intersecting with maximal \NIP~at most~$a-1$.
				\item\label{it:crossstillint2} The families~$\mathcal{F}_1^+,\dots,\mathcal{F}_{i-1}^+,\mathcal{F}_i^-,\mathcal{F}_{i+1}^+,\dots,\mathcal{F}_r^+$ are~$r$-cross~$t$-intersecting with maximal \NIP~at most~$a-1$.
			\end{enumerate}
		\end{claim}
		\begin{proof}
			\eqref{it:crossstillint1}: Assume the contrary and let~$F_j\in\mathcal{F}_j^-$ for~$j\in[r]\setminus i$ and~$F_i\in\mathcal{F}_i^+$ such that~$\vert [a-1]\cap \bigcap_{j \in [r]}F_j\vert<t$.
			Since~$\mathcal{F}_1,\dots,\mathcal{F}_r$ are~$r$-cross~$t$-intersecting, this means that there is some~$F'\in\mathcal{F}_i(a)$ (potentially~$F'=F_i$) with~$F_i\cap[a-1]=F'\cap[a-1]$.
			But then~$\vert [a-1]\cap F'\cap\bigcap_{j\in[r]\setminus i}F_j\vert<t$, which is a contradiction because~$F_j\in\mathcal{F}_j^-=\mathcal{F}_j\setminus\mathcal{F}_j(a)$.
			
			\eqref{it:crossstillint2}: Assume the contrary and let~$F_j\in\mathcal{F}_j^+$ for~$j\in[r]\setminus i$ and~$F_i\in\mathcal{F}_i^-$ such that~$\vert [a-1]\cap \bigcap_{j \in [r]}F_j\vert<t$.
			Since~$\mathcal{F}_1,\dots,\mathcal{F}_r$ are~$r$-cross~$t$-intersecting, this means that for all~$j\in[r]\setminus i$ there is an~$F'_j\in\mathcal{F}_j(a)$ with~$F_j\cap[a-1]=F'_j\cap[a-1]$.
			But then~$\vert [a-1]\cap F_i\cap\bigcap_{j\in[r]\setminus i}F'_j\vert<t$, which is a contradiction because~$F_i\in\mathcal{F}_i^-=\mathcal{F}_i\setminus\mathcal{F}_i(a)$.
		\end{proof}
		
		Now, let us show that the updated families will still have maximal measure, that is, that~\ref{it:size} holds.
		This essentially follows from the next two claims.
		\begin{claim}\label{cl:crossmeasure}
			For~$i\in[r]$ we have~$\mu_i(\mathcal{F}_i^{\text{add}})\geq\mu_i(\mathcal{F}_i(a))$.
		\end{claim}
		\begin{proof}
			If~$i\in[r_1+1,r]$, note that the definition of~$\mathcal{F}_i^{\text{add}}$ implies an injection~$\varphi:\mathcal{F}_i(a)\to\mathcal{F}_i^{\text{add}}$ with~$\vert \varphi(F)\vert =\vert F\vert-1$.
			Thus, recalling that~$\mu_i$ is non-increasing, the claim is proved.
			
			If~$i\in[r_1]$, we need to work a bit more.
			If~$r_1=0$, there is nothing else to show, so assume that~$r_1\geq 2$.
			First, we want to get an upper bound on~$a$.
			Let~$s$ be the minimal integer such that there is some~$m_*\in[r_1]$ and~$A_*\in[a-1]^{(s)}$ such that~$\mathcal{F}_{m_*}(A_*,a)\neq\emptyset$.
			By definition we know that for~$F\in\mathcal{F}_{m_*}(A_*,a)$ there are~$F_j\in\mathcal{F}_j$ for all~$j\in[r]\setminus m_*$ such that~$\vert [a-1]\cap F\cap\bigcap_{j\in[r]\setminus m_*}F_j\vert<t$.
			Thus, Lemma~\ref{lem:crosscover} yields that~$[a-1]\subseteq F\cup\bigcap_{j\in[r]\setminus m_*}F_j$.
			Since~$\vert F\cap[a-1]\vert=\vert A_*\vert=s$ and~$r_1\geq 2$, this entails~$a\leq s+1+\min_{j\in[r_1]\setminus m_*}k_j-t$.

			To show~$\mu_i(\mathcal{F}_i^{\text{add}})\geq\mu_i(\mathcal{F}_i(a))$ it is enough to show that for all~$k\in[k_i]$ we have~$\vert (\mathcal{F}_i^{\text{add}})^{k}\vert\geq\vert (\mathcal{F}_i(a))^{k}\vert$.
			
			Further, it is easy to see that for all~$k\in[k_i]$ $$(\mathcal{F}_i(a))^{k}\subseteq\bigdcup_{\substack{A\subseteq[a-1]:\\ \mathcal{F}_i(A,a)^k\neq\emptyset}}\{A\cup a\cup T:T\in[a+1,n]^{(k-1-\vert A\vert)}\}\,.$$
			Hence, in view of~\eqref{eq:crossadd}, to show~$\vert (\mathcal{F}_i^{\text{add}})^{k}\vert\geq\vert (\mathcal{F}_i(a))^{k}\vert$ it is enough to show that for every~$A\subseteq[a-1]$ with~$\mathcal{F}_i(A,a)^k\neq\emptyset$ we have~$\binom{n-a}{k-1-\vert A\vert}\leq\binom{n-a}{k-\vert A\vert}$, which in turn holds if~$\frac{n-a}{2}>k-1-\vert A\vert$.
			And indeed, the bounds on~$a$ and~$n$ entail $$\frac{n-a}{2}\geq\frac{n-s-1-\min_{j\in[r_1]\setminus m_*}k_j+t}{2}\geq\frac{2\max_{i\in[r_1]}k_i-s-1}{2}>k-1-\vert A\vert\,.$$
		\end{proof}
		
		Further let us observe the following.
		\begin{claim}\label{cl:crossdisjoint}
			For~$i\in[r]$ we have~$\mathcal{F}_i\cap\mathcal{F}_i^{\text{add}}=\emptyset$.
		\end{claim}
		\begin{proof}
			Assume there is some~$F\in\mathcal{F}_i\cap\mathcal{F}_i^{\text{add}}$.
			Then, because~$F\in\mathcal{F}_i^{\text{add}}$, there is some~$F'\in\mathcal{F}_i(a)$ with~$[a-1]\cap F=[a-1]\cap F'$.
			For~$F'$ on the other hand, there are~$F_j\in\mathcal{F}_j$ for all~$j\in[r]\setminus i$ such that~$\vert [a]\cap F'\cap\bigcap_{j\in[r]\setminus i}F_j\vert =t$ and~$a\in F'\cap\bigcap_{j\in[r]\setminus i}F_j$.
			But since~$F\in\mathcal{F}_i^{\text{add}}$, we know that~$a\notin F$ and thus we have~$\vert [a]\cap F\cap\bigcap_{j\in[r]\setminus i}F_j\vert <t$.
			This gives us a contradiction since~$F\in\mathcal{F}_i$ and~$\mathcal{F}_1,\dots,\mathcal{F}_r$ are~$r$-cross~$t$-intersecting with maximal \NIP~$a$.
		\end{proof}
		
		Finally, we can ``update'' the collection of families.
		If~$\mu_r(\mathcal{F}_r(a))\leq\sum_{i\in[r-1]}\mu_i(\mathcal{F}_i(a))$, we consider the families~$\mathcal{H}_i=\mathcal{F}_i^+$ for~$i\in[r-1]$ and~$\mathcal{H}_r=\mathcal{F}_r^-$.
		Recall that we have~$\mathcal{H}_i\subseteq[n]^{(k_i)}$ for~$i\in[r]$ and that they are non-empty by the condition that~$\mathcal{F}_i\setminus\mathcal{F}_i(a)\neq\emptyset$ for all~$i\in[r]$.
		By Claim~\ref{cl:crossstillint} these families are~$r$-cross~$t$-intersecting with their maximal \NIP~at most~$a-1$ and by Claim~\ref{cl:crossmeasure}, Claim~\ref{cl:crossdisjoint}, and~$\mu_r(\mathcal{F}_i(a))\leq\sum_{i\in[r-1]}\mu_i(\mathcal{F}_i(a))$ we have~$\sum_{i\in[r]}\mu_i(\mathcal{F}_i)\leq\sum_{i\in[r-1]}\mu_i(\mathcal{F}_i^+)+\mu_r(\mathcal{F}_r^-)$.
		Together, this yields~\ref{it:setting}-\ref{it:size} in the conclusion of the lemma.
		
		If~$\mu_r(\mathcal{F}_r(a))\geq\sum_{i\in[r-1]}\mu_i(\mathcal{F}_i(a))$, we consider the families~$\mathcal{F}_1^-,\dots,\mathcal{F}_{r-1}^-,\mathcal{F}_r^+$.
		Similarly as before, it follows that these will satisfy~\ref{it:setting}-\ref{it:size}.
	\end{proof}

	Both Theorem~\ref{thm: main1} and Theorem~\ref{thm: main2} can be obtained from the following more general result by setting~$r_1=r$ and~$r_1=0$ respectively.
	Moreover, this result also provides the maximum of~$\sum_{i\in[r]}\mu_i(\mathcal{F}_i)$ in the case when some of the families and measures satisfy the conditions in Theorem~\ref{thm: main1} and the others satisfy those in Theorem~\ref{thm: main2}.
	\begin{prop}\label{prop: propgen}
		Let~$r,t,n\in\mathds{N}$,~$r_1\in\mathds{N}_0$ with~$r\geq r_1$,~$r\geq 2$, and~$r_1\neq 1$, and let~$a\in[n]$.
		If~$r_1\geq 2$, for~$i\in[r_1]$ let~$k_i\in[n]$ be such that~$n\geq 2\displaystyle\max_{i\in[r_1]}k_i+\underset{i\in[r_1]}{\secmin}k_i-t$, and let~$\mu_i:[n]_0\to\mathds{R}_{\geq 0}$.
		For~$i\in[r_1+1,r]$ set~$k_i=n$ and let~$\mu_i:[n]_0\to\mathds{R}_{\geq 0}$ be non-increasing. For~$i\in[r]$ let~$\mathcal{F}_i\subseteq [n]^{(\leq k_i)}$.
		If~$\mathcal{F}_1,\dots, \mathcal{F}_r$ are non-empty~$r$-cross~$t$-intersecting families with maximal \NIP~at most~$a$, then
		\begin{align}
			\sum_{j\in [r]}\mu_j(\mathcal{F}_j)\leq \max\Big\{\mu_{\ell}(\scatfam{n}{a_*}{t}^{\leq k_{\ell}})+\sum_{j\in[r]\setminus\ell}\mu_j(\blockfam{n}{a_*}^{\leq k_j})\Big\}\,,
		\end{align}
		where the maximum is taken over~$\ell\in[r]$ and~$a_*\in\Big[t,\min\big\{a,\displaystyle\min_{i\in[r]\setminus\ell}k_i\big\}\Big]$.
	\end{prop}
	\begin{proof}
		We perform an induction on~$r$. 
		The beginning is the same for the induction start and the induction step.
		Let all the parameters and~$
		\mu_i$ be given as in the statement of the theorem and note that without restriction~$t\leq \min_{i\in[r]}k_i$.
		Further, let~$\mathcal{F}_1,\dots, \mathcal{F}_r$ be such that
		\begin{enumerate}
			\item\label{it:crosssetting} for~$i\in[r]$ we have $\mathcal{F}_i\subseteq [n]^{(\leq k_i)}$,
			\item\label{it:crossacrossint} they are~$r$-cross~$t$-intersecting with maximal \NIP~at most~$a$,
			\item \label{it:crossmaxsizes} they maximise~$\sum_{j\in[r]}\mu_j(\mathcal{F}_j)$ among all families satisfying~\eqref{it:crosssetting} and~\eqref{it:crossacrossint}, 
			\item\label{it:crossminNIP} their maximal \NIP~is minimal among those families that fulfil~\eqref{it:crosssetting},~\eqref{it:crossacrossint}, and~\eqref{it:crossmaxsizes}.
		\end{enumerate}
		Since the properties~\eqref{it:crosssetting},~\eqref{it:crossacrossint},~\eqref{it:crossmaxsizes}, and~\eqref{it:crossminNIP} are preserved when shifting, we may assume that~$\mathcal F_1,\dots, \mathcal F_r$ are shifted. 
		Denote the maximal \NIP~of~$\mathcal{F}_1,\dots ,\mathcal{F}_r$ by~$a_*$ and observe that if~$a_*=t$, we are done.
		So we assume that~$a_*\geq t+1$. 
		
		First, consider the case that for all~$i\in[r]$ we have that~$\mathcal{F}_i^-\neq\emptyset$.
		Then Lemma~\ref{lem:update} yields families~$\mathcal{H}_1,\dots,\mathcal{H}_r$ satisfying~\eqref{it:crosssetting}-\eqref{it:crossmaxsizes} with a maximal \NIP~smaller than~$a_*$.
		This is a contradiction to the choice of the families (see~\eqref{it:crossminNIP}) and thereby completes the proof of both the induction start and the induction step.
		
		Second, consider the case that for some~$j\in[r]$, without loss of generality~$r$, it holds that~$\mathcal{F}_r\setminus\mathcal{F}_r(a_*)=\emptyset$. 
		That is to say, all sets in $\mathcal F_r$ depend on $a_*$.
		
		Assume that there is a $b\in [a_*-1]$ and $F\in \mathcal F_r$ such that $b\notin F$. 
		As $F_r$ is shifted, we have that $\sigma_{ba_*}(F)\in \mathcal F_r$, but this set does not depend on $a_*$.
		Hence, for every~$F\in\mathcal{F}_r$ we have~$[a_*]\subseteq F$, in other words~$\mathcal{F}_r\subseteq\blockfam{n}{a_*}^{\leq k_r}$.
	
		For~$r=2$ notice that since~$a_*$ is the maximal \NIP, every~$F_1\in\mathcal{F}_1$ has at least~$t$ elements in~$[a_*]$.
		This yields~$\mathcal{F}_1\subseteq\scatfam{n}{a_*}{t}^{\leq k_1}$ and hence $$\mu_1(\mathcal{F}_1)+\mu_2(\mathcal{F}_2)\leq \mu_1(\scatfam{n}{a_*}{t}^{\leq k_1})+\mu_2(\blockfam{n}{a_*}^{\leq k_2})\,,$$ which finishes the proof of the induction start.
		
		For~$r\geq 3$ observe that the families~$\mathcal{F}_1,\dots,\mathcal{F}_{r-1}$ are~$(r-1)$-cross~$t$-intersecting families with maximal \NIP~at most $a_*$ which maximise~$\sum_{j\in[r-1]}\mu_j(\mathcal{F}_j)$ (among all~$(r-1)$-cross~$t$-intersecting families~$\mathcal{G}_i\subseteq[n]^{(k_i)}$ with maximal \NIP~at most $a_*$).
		Thus, the induction hypothesis implies that there is an~$\ell\in[r-1]$ and an~$a_{**}\in[a_*]$ such that 
		$$\sum_{j\in[r-1]}\mu_j(\mathcal{F}_j)\leq
		\mu_{\ell}(\scatfam{n}{a_{**}}{t}^{\leq k_{\ell}})
		+
		\sum_{j\in[r-1]\setminus\ell}\mu_j (\blockfam{n}{a_{**}}^{\leq k_j})\,.$$
		Since~$\mathcal{F}_r\subseteq\blockfam{n}{a_*}^{\leq k_r}\subseteq\blockfam{n}{a_{**}}^{\leq k_r}$, this entails $$\sum_{j\in[r]}\mu_j(\mathcal{F}_j)\leq\mu_{\ell}(\scatfam{n}{a_{**}}{t}^{\leq k_{\ell}})+\sum_{j\in[r]\setminus\ell}\mu_j(\blockfam{n}{a_{**}}^{\leq k_j})\,$$
		which finishes the induction step.
	\end{proof}
	
	\section{Concluding remarks}\label{sec: conrem}
	Observe that the maxima in our results are attained for different~$i$ (and~$\ell$), depending on the measures and~$r$,~$t$, and~$n$.
	However, we remark the following.
	\begin{remark}\label{rem:larger}
		For given~$t, n, k \in \mathds{N}$ and a measure~$\mu$ there is an~$r_0$ such that if~$r\geq r_0$, the maximum in Theorem~\ref{thm: main1} and Theorem~\ref{thm: main2} is always attained for~$i=t$ if~$\mu=\mu_j$ (and~$k_j=k$) for all~$j\in[r]$.
	\end{remark}
	
	One can also ask for the maximum of the product of sizes or, more generally, the product of measures of~$r$-cross~$t$-intersecting families, instead of the sum. 
	More precisely, for given measures~$\mu_1,\ldots,\mu_r$ find the maximum possible value of \setlength{\abovedisplayskip}{0pt}
	\begin{align}\label{prob:product}
		\prod_{i\in [r]}\mu_i(\mathcal{F}_i) \,
	\end{align}
	for~$\mathcal{F}_1,\ldots ,\mathcal{F}_r$ being $r$-cross $t$-intersecting families.
	
	There are some partial results concerning this problem (\cite{FranklKupavskii2, Borgcrosstintprod, Pyber,Tokushigercross}).
	Frankl and Tokushige~\cite{FranklTokushigeproduct} determined the maximal product of the sizes of~$r$-cross~$1$-intersecting families.
	In~\cite{Borgcrosstintprod}, Borg determined the maximum of~\eqref{prob:product} for~$r=2$ and measures with certain properties (which include the product measure, the uniform measure, and the constant measure).
	It is well known that for~$a_1,\ldots,a_r\in\mathds{R}_{\geq 0}$ with~$\sum_{i\in[r]}a_i\leq a$ the product~$\prod_{i\in[r]}a_i$ is maximised if~$a_i=\frac{a}{r}$ for all~$i\in[r]$. 
	Therefore, considering Remark~\ref{rem:larger}, given~$n$, measures~$\mu_i=\mu$ (and~$k_i=k$) with~$\mu$ (and~$k$ and~$n$) satisfying the conditions in Theorem~\ref{thm: main1} or Theorem~\ref{thm: main2}, there is an~$r_0$ such that for~$r\geq r_0$ these theorems actually also yield that the maximum of~\eqref{prob:product} is~$(\mu(\blockfam{n}{t}^{\leq k}))^r$.
	This particularly includes the product measure, the uniform measure, and the constant measure, and solves a few instances of the Problems~12.10 and~12.11, and of the Conjectures~12.12 and~12.13 posed by Frankl and Tokushige in~\cite{FranklTokushige}.
	
	\begin{bibdiv}
		\begin{biblist}
			
			\bib{productEKR}{article}{
				author={Ahlswede, R.},
				author={Katona, G. O. H.},
				title={Contributions to the geometry of Hamming spaces},
				journal={Discrete Math.},
				volume={17},
				date={1977},
				number={1},
				pages={1--22},
				issn={0012-365X},
				review={\MR{465883}},
				doi={10.1016/0012-365X(77)90017-6},
			}
			
			\bib{AK1}{article}{
				author={Ahlswede, Rudolf},
				author={Khachatrian, Levon H.},
				title={The complete intersection theorem for systems of finite sets},
				journal={European J. Combin.},
				volume={18},
				date={1997},
				number={2},
				pages={125--136},
				issn={0195-6698},
				review={\MR{1429238}},
				doi={10.1006/eujc.1995.0092},
			}
			
			\bib{AK2}{article}{
				author={Ahlswede, Rudolf},
				author={Khachatrian, Levon H.},
				title={A pushing-pulling method: new proofs of intersection theorems},
				journal={Combinatorica},
				volume={19},
				date={1999},
				number={1},
				pages={1--15},
				issn={0209-9683},
				review={\MR{1722359}},
				doi={10.1007/s004930050042},
			}
			
			\bib{AK3}{article}{
				author={Ahlswede, Rudolf},
				author={Khachatrian, Levon H.},
				title={The diametric theorem in Hamming spaces---optimal anticodes},
				journal={Adv. in Appl. Math.},
				volume={20},
				date={1998},
				number={4},
				pages={429--449},
				issn={0196-8858},
				review={\MR{1612850}},
				doi={10.1006/aama.1998.0588},
			}
			
			\bib{BeyEngel}{article}{
				author={Bey, Christian},
				author={Engel, Konrad},
				title={Old and new results for the weighted $t$-intersection problem via
					AK-methods},
				conference={
					title={Numbers, information and complexity},
					address={Bielefeld},
					date={1998},
				},
				book={
					publisher={Kluwer Acad. Publ., Boston, MA},
				},
				date={2000},
				pages={45--74},
				review={\MR{1752947}},
				doi={10.1007/978-1-4757-6048-4\_5},
			}
			
			\bib{Borgcrosstintprod}{article}{
				author={Borg, Peter},
				title={The maximum product of weights of cross-intersecting families},
				journal={J. Lond. Math. Soc. (2)},
				volume={94},
				date={2016},
				number={3},
				pages={993--1018},
				issn={0024-6107},
				review={\MR{3614937}},
				doi={10.1112/jlms/jdw067},
			}
			
			\bib{DinurSafra}{article}{
				author={Dinur, Irit},
				author={Safra, Samuel},
				title={On the hardness of approximating minimum vertex cover},
				journal={Ann. of Math. (2)},
				volume={162},
				date={2005},
				number={1},
				pages={439--485},
				issn={0003-486X},
				review={\MR{2178966}},
				doi={10.4007/annals.2005.162.439},
			}

			\bib{Erdossurv}{article}{
				author={Erd\H{o}s, Paul},
				title={My joint work with Richard Rado},
				conference={
					title={Surveys in combinatorics 1987},
					address={New Cross},
					date={1987},
				},
				book={
					series={London Math. Soc. Lecture Note Ser.},
					volume={123},
					publisher={Cambridge Univ. Press, Cambridge},
				},
				date={1987},
				pages={53--80},
				review={\MR{905276}},
			}
			
			\bib{EKR}{article}{
				author={Erd\H{o}s, P.},
				author={Ko, Chao},
				author={Rado, R.},
				title={Intersection theorems for systems of finite sets},
				journal={Quart. J. Math. Oxford Ser. (2)},
				volume={12},
				date={1961},
				pages={313--320},
				issn={0033-5606},
				review={\MR{140419}},
				doi={10.1093/qmath/12.1.313},
			}
			
			\bib{Filmus1}{article}{
				author={Filmus, Yuval},
				title={The weighted complete intersection theorem},
				journal={J. Combin. Theory Ser. A},
				volume={151},
				date={2017},
				pages={84--101},
				issn={0097-3165},
				review={\MR{3663490}},
				doi={10.1016/j.jcta.2017.04.008},
			}
			
			\bib{Filmus2}{article}{
				author={Filmus, Yuval},
				title={More complete intersection theorems},
				journal={Discrete Math.},
				volume={342},
				date={2019},
				number={1},
				pages={128--142},
				issn={0012-365X},
				review={\MR{3886256}},
				doi={10.1016/j.disc.2018.09.017},
			}
			
			\bib{FranklPhD}{thesis}{
				author={Frankl, Peter},
				title={Extremal set systems},
				publisher={Hungarian Academy of Sciences},
				date={1977},
			}
			
			\bib{Franklckt}{article}{
				author={Frankl, P.},
				title={The Erd\H{o}s-Ko-Rado theorem is true for $n=ckt$},
				conference={
					title={Combinatorics},
					address={Proc. Fifth Hungarian Colloq., Keszthely},
					date={1976},
				},
				book={
					series={Colloq. Math. Soc. J\'{a}nos Bolyai},
					volume={18},
					publisher={North-Holland, Amsterdam-New York},
				},
				date={1978},
				pages={365--375},
				review={\MR{519277}},
			}
			
			\bib{FranklFuredi}{article}{
				author={Frankl, Peter},
				author={F\"{u}redi, Zolt\'{a}n},
				title={Beyond the Erd\H{o}s-Ko-Rado theorem},
				journal={J. Combin. Theory Ser. A},
				volume={56},
				date={1991},
				number={2},
				pages={182--194},
				issn={0097-3165},
				review={\MR{1092847}},
				doi={10.1016/0097-3165(91)90031-B},
			}
			
			\bib{FranklKupavskii}{article}{
				author={Frankl, Peter},
				author={Kupavskii, Andrey},
				title={Uniform $s$-cross-intersecting families},
				journal={Combin. Probab. Comput.},
				volume={26},
				date={2017},
				number={4},
				pages={517--524},
				issn={0963-5483},
				review={\MR{3656339}},
				doi={10.1017/S0963548317000062},
			}
			
			\bib{FranklKupavskii2}{article}{
				author={Frankl, Peter},
				author={Kupavskii, Andrey},
				title={A size-sensitive inequality for cross-intersecting families},
				journal={European J. Combin.},
				volume={62},
				date={2017},
				pages={263--271},
				issn={0195-6698},
				review={\MR{3621739}},
				doi={10.1016/j.ejc.2017.01.004},
			}
			
			\bib{FranklTokushigeproduct}{article}{
				author={Frankl, Peter},
				author={Tokushige, Norihide},
				title={On $r$-cross intersecting families of sets},
				journal={Combin. Probab. Comput.},
				volume={20},
				date={2011},
				number={5},
				pages={749--752},
				issn={0963-5483},
				review={\MR{2825588}},
				doi={10.1017/S0963548311000289},
			}
			
			\bib{FranklTokushige}{book}{
				author={Frankl, Peter},
				author={Tokushige, Norihide},
				title={Extremal problems for finite sets},
				series={Student Mathematical Library},
				volume={86},
				publisher={American Mathematical Society, Providence, RI},
				date={2018},
				pages={viii+224},
				isbn={978-1-4704-4039-8},
				review={\MR{3822342}},
				doi={10.1090/stml/086},
			}
			
			\bib{FranklandTokushigeNonuniform}{article}{
				author={Frankl, Peter},
				author={Tokushige, Norihide},
				title={Some best possible inequalities concerning cross-intersecting
					families},
				journal={J. Combin. Theory Ser. A},
				volume={61},
				date={1992},
				number={1},
				pages={87--97},
				issn={0097-3165},
				review={\MR{1178386}},
				doi={10.1016/0097-3165(92)90054-X},
			}
			
			\bib{FranklWong}{article}{
				author={Frankl, Peter},
				author={Wong H.W., Willie},
				title={Analogues of Katona’s and Milner’s Theorems for two families},
				eprint={2006.12602},
			}

			\bib{Hilton1}{article}{
				author={Hilton, A. J. W.},
				title={An intersection theorem for a collection of families of subsets of
					a finite set},
				journal={J. London Math. Soc. (2)},
				volume={15},
				date={1977},
				number={3},
				pages={369--376},
				issn={0024-6107},
				review={\MR{444483}},
				doi={10.1112/jlms/s2-15.3.369},
			}
			
			\bib{HiltonMilner}{article}{
				author={Hilton, A. J. W.},
				author={Milner, E. C.},
				title={Some intersection theorems for systems of finite sets},
				journal={Quart. J. Math. Oxford Ser. (2)},
				volume={18},
				date={1967},
				pages={369--384},
				issn={0033-5606},
				review={\MR{219428}},
				doi={10.1093/qmath/18.1.369},
			}
			
			\bib{Tokushigercross}{article}{
				author={Matsumoto, Makoto},
				author={Tokushige, Norihide},
				title={The exact bound in the Erd\H{o}s-Ko-Rado theorem for
					cross-intersecting families},
				journal={J. Combin. Theory Ser. A},
				volume={52},
				date={1989},
				number={1},
				pages={90--97},
				issn={0097-3165},
				review={\MR{1008163}},
				doi={10.1016/0097-3165(89)90065-4},
			}
			
			\bib{Pyber}{article}{
				author={Pyber, L.},
				title={A new generalization of the Erd\H{o}s-Ko-Rado theorem},
				journal={J. Combin. Theory Ser. A},
				volume={43},
				date={1986},
				number={1},
				pages={85--90},
				issn={0097-3165},
				review={\MR{859299}},
				doi={10.1016/0097-3165(86)90025-7},
			}
			
			\bib{ShiFranklQian}{article}{
				author={Shi, Chao},
				author={Frankl, Peter},
				author={Qian, Jianguo},
				title={On non-empty cross-intersecting families},
				eprint={2009.09396v2},
			}
			
			\bib{SudaTanaka}{article}{
				author={Suda, Sho},
				author={Tanaka, Hajime},
				title={A cross-intersection theorem for vector spaces based on
					semidefinite programming},
				journal={Bull. Lond. Math. Soc.},
				volume={46},
				date={2014},
				number={2},
				pages={342--348},
				issn={0024-6093},
				review={\MR{3194752}},
				doi={10.1112/blms/bdt101},
			}
			
			\bib{TokushigeproductAK}{article}{
				author={Tokushige, Norihide},
				title={Intersecting families---uniform versus weighted},
				journal={Ryukyu Math. J.},
				volume={18},
				date={2005},
				pages={89--103},
				issn={1344-008X},
				review={\MR{2200201}},
			}
			
			\bib{WilsonEKR}{article}{
				author={Wilson, Richard M.},
				title={The exact bound in the Erd\H{o}s-Ko-Rado theorem},
				journal={Combinatorica},
				volume={4},
				date={1984},
				number={2-3},
				pages={247--257},
				issn={0209-9683},
				review={\MR{771733}},
				doi={10.1007/BF02579226},
			}
		\end{biblist}
	\end{bibdiv}

\end{document}